\newtheorem{Theorem}{Theorem}
\newtheorem{Lemma}{Lemma}
\newtheorem{Proposition}{Proposition}
\newtheorem{Corollary}{Corollary}
\newtheorem{Definition}{Definition}
\newtheorem{Remark}{Remark}
\newtheorem{Example}{Example}
\title{Weak Nearly $\mathcal S$- and Weak Nearly $\mathcal C$- Manifolds}
\author{Vladimir Rovenski
\footnote{Department of Mathematics, University of Haifa, Mount Carmel, 3498838 Haifa, Israel
\newline e-mail: {\tt vrovenski@univ.haifa.ac.il}
}}
\begin{document}

\date{}

\maketitle

\begin{abstract}
The recent interest of geometers in the $f$-structures of K.~Yano is motivated by the study of the dynamics of contact foliations, as well as their applications in theoretical physics.
Weak metric $f$-structures on a smooth manifold, recently introduced by the author and R. Wolak,
open a new perspective on the theory of classical structures.
In~the paper, we define structures of this kind, called weak nearly ${\cal S}$- and weak nearly ${\cal C}$- structures, study their geometry, e.g. their relations to Killing vector fields, and characterize weak nearly ${\cal S}$- and weak nearly ${\cal C}$- submanifolds in a weak nearly K\"{a}hler manifold.

\vskip1.mm
\noindent
\textbf{Keywords}: weak nearly ${\cal S}$-manifold; weak nearly ${\cal C}$-manifold; Killing vector field; submanifold; weak nearly K\"{a}hler manifold

\vskip1.mm
\noindent
\textbf{Mathematics Subject Classifications (2010)}: {53C15; 53C25; 53D15}
\end{abstract}

\section{Introduction}
\label{sec:00-ns}

The~$f$-structure introduced by K.\,Yano~\cite{yan} on a smooth manifold $M^{2n+s}$ serves as a higher-dimensio\-nal analog
of {almost complex structures} ($s=0$) and {almost contact structures} ($s=1$).
This structure is defined by a (1,1)-tensor $f$ of rank $2n$ such that \mbox{$f^3 + f = 0$.}
The tangent bundle splits into two complementary subbundles: \mbox{$TM=f(TM)\oplus\ker f$.}
The~restriction of $f$ to the $2n$-dimensional distribution $f(TM)$ defines a {complex structure}.
The existence of the $f$-structure on $M^{2n+s}$ is equivalent to a reduction of the
structure group to $U(n)\times O(s)$; see~\cite{b1970}.
A submanifold $M$ of an almost complex manifold $(\bar M,J)$ that satisfies the condition $\dim(T_xM\cap J(T_xM))=const>0$
naturally possesses an $f$-structure; see~\cite{L-1969}.
An $f$-structure is a~special case of an {almost product structure}, defined by two complementary orthogonal distributions of a Riemannian mani\-fold $(M,g)$.
Foliations appear when one or both distributions are involutive.
An~interesting case occurs when the sub-bundle $\ker f$ is parallelizable, leading to a framed $f$-structure
for which the reduced structure group is $U(n)\times {\rm Id}_s$.
In this scenario, there exist vector fields $\{\xi_i\}_{1\le i\le s}$ (called Reeb vector fields) spanning $\ker f$
with dual 1-forms $\{\eta^i\}_{1\le i\le s}$, satisfying ${f}^2 = -{\rm Id} + \sum\nolimits_{\,i=1}^s {\eta^i}\otimes {\xi_i}$.
Compatible Riemannian metrics, i.e., 
\[
 g({f}X,{f}Y)=g(X,Y)-\sum\nolimits_{\,i=1}^s {\eta^i}(X)\,{\eta^i}(Y),
\]
exist on any framed $f$-manifold, and we~obtain the {metric $f$-structure}; see~\cite{b1970,CFF-1990,tpw-2014,Di-T-2006}.

To generalize concepts and results from almost contact geometry to metric $f$-mani\-folds, geometers have introduced and studied various broad classes of metric $f$-structu\-res.
 A~metric $f$-manifold is termed a ${\cal K}$-{manifold} if it is normal and $d \Phi=0$, where $\Phi(X,Y):=g(X,{f} Y)$.
Two important subclasses of ${\cal K}$-manifolds are {${\cal C}$-manifolds} if $d\eta^i=0$ and {${\cal S}$-manifolds} if $d\eta^i = \Phi$ for any~$i$;
see~\cite{b1970}.
Omitting the normality condition, we obtain almost ${\cal K}$-manifolds, almost ${\cal S}$-manifolds and almost ${\cal C}$-manifolds, e.g.,~\cite{CdT-2007,CFL-2020,Fitz-2011}.
The~distribution $\ker f$ of a ${\cal K}$-manifold is tangent to a $\mathfrak{g}$-foliation with flat totally geodesic leaves.
An~$f$-{K}-contact manifold is an almost ${\cal S}$-manifold, whose Reeb vector fields are Killing vector fields; the structure is intermediate between almost ${\cal S}$-structure and $S$-structure; see~\cite{Di-T-2006,Goertsches-2}.
Nearly ${\cal S}$- and nearly ${\cal C}$-manifolds $(M^{2n+s}, {f},\xi_i,\eta^i,g)$ are defined in the same spirit
as the nearly K\"{a}hler manifolds of A.~Gray~\cite{G-70}
by a constraint only on the symmetric part of $\nabla{f}$ --
starting from ${\cal S}$- and ${\cal C}$-manifolds
(e.g.,~\cite{BA-2019,AT-2017,rov-122,blair1976}):
\begin{equation*}
 (\nabla_X{f})X =
 \left\{\begin{array}{cc}
  g(f X, f X)\,\bar\xi +\bar\eta(X) f^2 X\,, & \mbox{nearly\ } {\cal S}-\mbox{manifolds}. \\
  0\,, & \mbox{nearly\ } {\cal C}-\mbox{manifolds}.
 \end{array}\right.
\end{equation*}

Here, $\bar\eta=\sum\nolimits_{\,i=1}^s\eta^i$ and $\bar\xi=\sum\nolimits_{\,i=1}^s\xi_i$.
These counterparts of nearly K\"{a}hler manifolds
play a key role in the classification of metric $f$-manifolds; see~\cite{b1970}.
The Reeb vector fields $\xi_i$ of nearly ${\cal S}$- and nearly ${\cal C}$-structures are unit Killing vector fields.
The~influence of constant-length Killing vector fields on Riemannian geometry has been studied by many authors, e.g.,~\cite{N-2021}.
The interest of geometers in $f$-structures is also motivated by the study of the dynamics of contact foliations.
Contact foliations generalize to higher dimensions the flow of
the Reeb vector field on contact manifolds, and ${\cal K}$-structures are a particular case of uniform $s$-contact structures; see~\cite{Alm-2024,Fil-2024}.
Dynamics and integration on $s$-cosymplectic manifolds are studied in~\cite{LSZ-2025};
they investigate the Lie integrability of $s$-evolution systems in this setting, and develop a Hamilton--Jacobi theory tailored to multi-time
Hamiltonian systems, both via symplectification techniques.

%

In~\cite{RWo-2,rst-43,rov-127}, we introduced and studied metric structures on a smooth manifold, see Definition~\ref{D-wK2},
which genera\-lize almost Hermitian, almost contact (e.g., Sasakian and cosymplectic) and $f$-structures.
 Such so-called ``weak'' structures (the complex structure on the contact distribution is replaced by a nonsingular skew-symmetric tensor)
allow us a new look at the theory of classical structures and find new applications.
A.~Einstein worked on various variants of Unified Field Theory,
more recently known as Non-symmetric Gravitational Theory (NGT), see~\cite{Mof}.
In this theory, the symmetric part $g$ of the basic tensor $G = g + F$ is associated with gravity, and the skew-symmetric one $F$ is associated with electromagnetism. The theory of weak metric structures is fully consistent with the skew-symmetric part of $G$; thus, it provides new tools for studying NGT.
S.~Ivanov and M.~Zlatanovi\' c developed NGT with li\-near connections of totally skew-symmetric torsion
and gave examples with the skew-symmetric part $F$ of the tensor $G$ obtained using an almost contact metric structure; see~\cite{IZ1}.
In~\cite{ZR1}, the author and M.\,Zlatanovi\'c were the first to apply weak
metric structures to
NGT of totally skew-symmetric torsion with tensor $F(X,Y)=g(X,fY)$ of constant~rank.

In this paper, we define and study new structures of this kind, generalizing nearly ${\cal S}$- and nearly ${\cal C}$-structures.
Section~\ref{sec:01-ns}, following the Introduction,
recalls some results regarding
weak nearly K\"{a}hler manifolds (generalizing nearly K\"{a}hler manifolds) and weak metric $f$-manifolds.
Section~\ref{sec:02-ns} introduces weak nearly ${\cal S}$- and weak nearly ${\cal C}$-structures and studies their geometry.
Section~\ref{sec:03-ns} characterizes weak nearly ${\cal C}$- and weak nearly ${\cal S}$-submanifolds in weak nearly K\"{a}hler manifolds and proves that a weak nearly ${\cal C}$-manifold with parallel Reeb vector fields is locally the Riemannian product of a Euclidean space
and a weak nearly K\"{a}hler manifold.
The~proofs use the properties of new tensors, as well as classical~constructions.

\section{Preliminaries}
\label{sec:01-ns}

Here, we review some
results; see~\cite{RWo-2,rst-43,rov-127}.
Nearly K\"{a}hler {manifolds} $(M,J,g)$ were defined by A.~Gray~\cite{G-70} using the condition that only the symmetric part of $\nabla J$ vanishes,
where $\nabla$ is the Levi-Civita connection,
in contrast to the K\"{a}hler case, where $\nabla J=0$.
Several authors studied the problem of finding and classifying parallel skew-symmetric 2-tensors
(other than almost-complex structures) on a Riemannian manifold, e.g.,~\cite{H-2022}.

\begin{Definition}\label{D-wK2}\rm

A Riemannian manifold $(M, g)$ of even dimension equipped with a skew-symmetric (1,1)-tensor ${f}$
such that the tensor ${f}^{\,2}$ is negative-definite is called a \textit{weak Hermitian mani\-fold}.
Such $(M, {f}, g)$ is called a \textit{weak K\"{a}hler manifold} if $\nabla{f}=0$.
A~weak Hermitian manifold is called a \textit{weak nearly K\"{a}hler manifold}
if
\begin{equation}\label{Eq-NS-9}
 (\nabla_X{f})Y + (\nabla_Y{f})X=0\quad (X,Y\in\mathfrak{X}_M).
\end{equation}
A~\textit{weak metric $f$-structure} on a smooth manifold $M^{2n+s}$ $(n,s>0)$ is a set $({f},Q,{\xi_i},{\eta^i},g)$, where
${f}$ is a skew-symmetric $(1,1)$-tensor of rank $2\,n$, $Q$ is a self-adjoint nonsingular $(1,1)$-tensor,
${\xi_i}\ (1\le i\le s)$ are orthonormal
vector fields, ${\eta^i}$ are dual 1-forms, and $g$ is a~Riemannian metric on $M$, satisfying
\begin{align}\label{2.1}
 {f}^2 = -Q + \sum\nolimits_{\,i=1}^s {\eta^i}\otimes {\xi_i},\quad {\eta^i}({\xi_j})=\delta^i_j,\quad
 Q\,{\xi_i} = {\xi_i}, \\
\label{2.2}
 g({f} X,{f} Y)= g(X,Q\,Y) -\sum\nolimits_{\,i=1}^s {\eta^i}(X)\,{\eta^i}(Y)\quad (X,Y\in\mathfrak{X}_M).
\end{align}
In this case, $(M^{2n+s}, {f},Q,{\xi_i},{\eta^i},g)$ is called a \textit{weak metric $f$-manifold}.
\end{Definition}

The geometric meaning of \eqref{Eq-NS-9} is the same as in the classical case: geodesics are $f$-planar curves.
A curve $\gamma$ is \textit{f-planar} if the section $\dot\gamma\wedge f\dot\gamma$ is parallel along the curve.
A~framed weak $f$-manifold (i.e., only \eqref{2.1} holds) admits a compatible metric (i.e., also \eqref{2.2} holds) if ${f}$
in \eqref{2.1} has a skew-sym\-metric representation, i.e., for any $x\in M$ there exists a~frame $\{e_i\}$ on a neighborhood $U_x\subset M$,
for which ${f}$ has a skew-symmetric matrix.

\begin{Example}\rm
Take $k>1$ almost Hermitian manifolds $(M_j, f_j, g_j)$.
The~Riemannian pro\-duct $\prod_{\,j=1}^k (M_j, \lambda_j^{1/2}\,f_j, g_j)$,
where $\lambda_j>0$ are different constants, is a weak almost Hermitian manifold with $Q=\bigoplus_{\,j}\lambda_j\,{\rm Id}_{\,j}$.
We~call $\prod_{\,j} (M_j, \lambda_j^{1/2}\,f_j, g_j)$ a $(\lambda_1,\ldots,\lambda_k)$-\textit{weighed product of almost Hermitian manifolds} $(M_j, f_j, g_j)$; see~\cite{rz-2025}.
The $(\lambda_1,\ldots,\lambda_k)$-weighed product of (nearly) K\"{a}hler manifolds is a weak (nearly) K\"{a}hler mani\-fold.
A~ne\-arly K\"{a}hler manifold of dimension $\le 4$ is a K\"{a}hler manifold; see~\cite{G-70}.
The~unit sphere $S^6$ in the set of purely imaginary Cayley numbers admits a strictly nearly K\"{a}hler structure.
The~classification of weak nearly K\"{a}hler manifolds in dimensions $\ge 4$ is an open problem.
The $(\lambda_1,\lambda_2)$-weighed pro\-ducts of 2-dimensional K\"{a}hler manifolds are 4-dimensional weak nearly K\"{a}hler manifolds.
The $(\lambda_1,\lambda_2,\lambda_3)$-weighed products of 2-dimensional K\"{a}hler manifolds
and $(\lambda_1,\lambda_2)$-weighed products of 2- and 4-dimensional K\"{a}hler manifolds
are  6-dimensional weak nearly K\"{a}hler manifolds, and similarly for
dimensions $>6$.
\end{Example}

Putting $Y=\xi_j$ in \eqref{2.2}, and using ${\eta^i}({\xi_j})=\delta^i_j$, we get
\begin{align}\label{2.2-eta}
 \eta^j(X) = g(X,\xi_j);
\end{align}
thus, ${\xi_j}$ is orthogonal to the distribution ${\mathcal D}=\bigcap_{\,i=1}^s \ker{\eta^i}$.
For a more intuitive under\-standing of the role of $Q$ in the $f$-structure, we explain the following properties:
\[
 {f}\,{\xi_i}=0,\quad {\eta^i}\circ{f}=0,\quad \eta^i\circ Q=\eta^i,\quad [Q,\,{f}]=0 .
\]
By \eqref{2.1}, $f^2\xi_i=0$ is true. From this and \eqref{2.1}, we get
 $f^3 + fQ = 0$.
By this, $Q\xi_i=\xi_i$ and $f^2\xi_i=0$ we get $0=-f^3\xi_i=fQ\xi_i=f\xi_i$.
By $f\xi_i=0$, \eqref{2.2-eta}, and the skew-symmetry of $f$, we get $\eta^i(fX)=g(fX,\xi_i)=-g(X,f\xi_i)=0$.
From this and condition ${\rm rank}\,f=2n$, we conclude that $f$ the distribution ${\mathcal D}$
of a weak metric $f$-structure is ${f}$-invariant, ${\mathcal D}=f(TM)$ and $\dim{\mathcal D}=2\,n$.
By this and $f^3 + fQ = 0$, we get $f^3X=f^2(fX)=-QfX$; hence, $f^3 + Qf = 0$.
This and $f^3 + fQ = 0$ yield $fQ=Qf$. By symmetry of $Q$ and $Q\xi_i=\xi_i$, we get $\eta^i(QX)=g(QX,\xi_i)=g(X,Q\xi_i)=g(X,\xi_i)=\eta^i(X)$.
Therefore, $TM$ splits as complementary orthogonal sum of
${\mathcal D}$ and~$\ker f$.
 A weak metric $f$-structure $({f},Q,{\xi_i},{\eta^i},g)$ is said to be {normal} if the following tensor is zero:
\begin{align*}
 {\cal N}^{\,(1)}(X,Y) = [{f},{f}](X,Y) + 2\sum\nolimits_{\,i=1}^s d{\eta^i}(X,Y)\,{\xi_i}\quad (X,Y\in\mathfrak{X}_M).
\end{align*}
The Nijenhuis torsion
of a (1,1)-tensor ${S}$ and the
derivative of a 1-form ${\omega}$ are given~by
\begin{align*}
\notag
 & [{S},{S}](X,Y) = {S}^2 [X,Y] + [{S} X, {S} Y] - {S}[{S} X,Y] - {S}[X,{S} Y]\quad (X,Y\in\mathfrak{X}_M), \\
 & d\omega(X,Y) = (1/2)\,\{X({\omega}(Y)) - Y({\omega}(X)) - {\omega}([X,Y])\}\quad (X,Y\in\mathfrak{X}_M).
\end{align*}
Using the Levi-Civita connection $\nabla$ of $g$, one can rewrite $[S,S]$ as
\begin{align}\label{4.NN}
 [{S},{S}](X,Y) = ({S}\nabla_Y{S} - \nabla_{{S} Y}{S}) X - ({S}\nabla_X{S} - \nabla_{{S} X}{S}) Y .
\end{align}
The {fundamental $2$-form} $\Phi$ on $(M^{2n+s}, {f},Q,\xi_i,\eta^i,g)$ is defined by
\[
 \Phi(X,Y)=g(X,{f} Y)\quad
  (X,Y\in\mathfrak{X}_M).
\]

\begin{Proposition}
A weak metric $f$-structure with condition ${\cal N}^{\,(1)}=0$ satisfies
\begin{align*}
 & \pounds_{{\xi_i}}{f} = d{\eta^j}({\xi_i}, \cdot)
 = 0,\\
 & d{\eta^i}({f} X,Y) - d{\eta^i}({f} Y,X)
= \frac12\,\eta^i([\widetilde QX, fY]),\\
 & \nabla_{\xi_i}\,\xi_j\in{\cal D},\quad  [X,\xi_i]\in{\cal D}\quad (1\le i,j\le s,\ X\in{\cal D}).
\end{align*}
Moreover, $\nabla_{\xi_i}\,\xi_j+\nabla_{\xi_j}\,\xi_i=0$, that is, $\ker f$ defines a totally geodesic distribution.
\end{Proposition}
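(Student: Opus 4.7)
The strategy is to test the normality condition $\mathcal{N}^{(1)}=0$ on pairs involving the Reeb fields, where $f\xi_i=0$ forces cancellations between the $f^{\,2}$-term of $[f,f]$ and the $d\eta^k$-term of $\mathcal{N}^{(1)}$, and then to exploit the nonsingularity of $Q$ to extract clean bracket identities. The computation on $(\xi_i,\xi_j)$ collapses to $Q[\xi_i,\xi_j]=0$, hence $[\xi_i,\xi_j]=0$. The computation on $(\xi_i,X)$ with $X\in\mathcal{D}$ collapses to $Q[\xi_i,X]+f[\xi_i,fX]=0$; contracting with $\eta^j$ and using $\eta^j\circ Q=\eta^j$ and $\eta^j\circ f=0$ gives $\eta^j([\xi_i,X])=0$, i.e.\ $[\xi_i,X]\in\mathcal{D}$. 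This is the bracket assertion of~\eqref{Eq-normal-4}, and combined with the constancy of $\eta^j(\xi_i)$ it also delivers $d\eta^j(\xi_i,\cdot)=0$ via $2\,d\eta^j(\xi_i,Y)=-\eta^j([\xi_i,Y])$.

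For $\pounds_{\xi_i}f=0$, the case $X\in\ker f$ is immediate from $f[\xi_i,\xi_k]=0$. On $\mathcal{D}$ I apply $f$ to the relation $Q[\xi_i,X]=-f[\xi_i,fX]$, use $[Q,f]=0$ and $f^{\,2}=-Q+\sum_k\eta^k\otimes\xi_k$, and observe that the $\xi_k$-correction vanishes because $[\xi_i,fX]\in\mathcal{D}$ (reapply the previous step to $fX$). This reduces to $Q\bigl(f[\xi_i,X]-[\xi_i,fX]\bigr)=0$, and nonsingularity of $Q$ finishes the case.

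Identity~\eqref{Eq-normal-3} is the delicate step, where the deformation tensor $\widetilde Q$ enters nontrivially. Contracting $\mathcal{N}^{(1)}=0$ with $\eta^i$ (the $f$- and $f^{\,2}$-terms are killed) produces the master identity $\eta^i([fX,fY])=-2\,d\eta^i(X,Y)$ for arbitrary $X,Y$. I then substitute $fX$ for $X$ and expand $f^{\,2}X=-QX+\sum_k\eta^k(X)\xi_k$. The crucial subtlety is that the bracket $[\eta^k(X)\xi_k,fY]$ produces not only $\eta^k(X)[\xi_k,fY]$ (killed by $\eta^i$ because $[\xi_k,fY]\in\mathcal{D}$) but also a derivation term $-(fY)(\eta^k(X))\xi_k$, whose $\eta^i$-image is $-(fY)\eta^i(X)$. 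Tracking both contributions yields $\eta^i([QX,fY])=2\,d\eta^i(fX,Y)-(fY)\eta^i(X)$. A direct expansion from the definition of $d$ gives $\eta^i([X,fY])=-(fY)\eta^i(X)+2\,d\eta^i(fY,X)$. Subtracting, the derivation corrections $-(fY)\eta^i(X)$ cancel exactly and, using $\widetilde Q=Q-\mathrm{id}$, one obtains $\eta^i([\widetilde QX,fY])=2[d\eta^i(fX,Y)-d\eta^i(fY,X)]$. This bookkeeping of the $-(fY)\eta^i(X)$ correction is the main obstacle, since in the classical case $Q=\mathrm{id}$ the tensor $\widetilde Q$ is zero, both sides collapse trivially, and the subtlety is invisible.

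For the remaining conclusions, the Koszul formula for $2\,g(\nabla_{\xi_i}\xi_j,Z)$ has its three directional-derivative terms equal to derivatives of constants $g(\xi_a,\xi_b)\in\{0,\delta_{ab}\}$, and its three bracket terms vanish because $[\xi_i,\xi_j]=0$ and because $[\xi_a,Z]\in\mathcal{D}$ for $Z\in\mathcal{D}$ (so the pairing against any $\xi$ is zero). All six terms vanish, so $\nabla_{\xi_i}\xi_j=0$; in particular $\nabla_{\xi_i}\xi_j+\nabla_{\xi_j}\xi_i=0$, and $\ker f$ is totally geodesic.
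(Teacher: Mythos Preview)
The paper does not supply a proof of this Proposition: it sits in the Preliminaries section, explicitly described as recalling known results from \cite{RWo-2,rst-43,rov-127}, and no argument follows the statement. So there is nothing to compare against; I can only assess your argument on its own.

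Your proof is correct. The overall architecture---evaluate $\mathcal{N}^{(1)}$ on $(\xi_i,\xi_j)$ and $(\xi_i,X)$ with $X\in\mathcal{D}$, exploit $f\xi_i=0$ and the identities $\eta^j\circ Q=\eta^j$, $\eta^j\circ f=0$, and the nonsingularity of $Q$ to extract the bracket conclusions, then bootstrap to $\pounds_{\xi_i}f=0$ and the Koszul computation---is sound. Your derivation of \eqref{Eq-normal-3} is also correct, and you have correctly identified the place where $\widetilde Q$ genuinely enters: the extra derivation term $-(fY)(\eta^i(X))$ appears in both the $[QX,fY]$ expansion and in the raw $d\eta^i(fY,X)$ formula, and subtracting is exactly what isolates $\widetilde Q$.

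Two minor remarks. First, in the Koszul step your description ``derivatives of constants $g(\xi_a,\xi_b)$'' is only literally accurate for $Z=\xi_k$; for $Z\in\mathcal{D}$ the relevant functions are $g(\xi_j,Z)\equiv 0$ (since $Z$ is a section of $\mathcal{D}$), which of course still differentiate to zero. Second, you actually prove more than the Proposition asserts: you obtain $[\xi_i,\xi_j]=0$ and $\nabla_{\xi_i}\xi_j=0$, whereas the statement only records $\nabla_{\xi_i}\xi_j\in\mathcal{D}$ together with the symmetric part $\nabla_{\xi_i}\xi_j+\nabla_{\xi_j}\xi_i=0$. Your stronger conclusion is consistent with those (indeed, combining the stated symmetric vanishing with your $[\xi_i,\xi_j]=0$ already forces $\nabla_{\xi_i}\xi_j=0$), so nothing is lost.
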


These tensors
on a weak metric $f$-manifold are well known in the classical theory:
\begin{align*}
 {\cal N}^{\,(2)}_i(X,Y) &:= (\pounds_{{f} X}\,{\eta^i})(Y) - (\pounds_{{f} Y}\,{\eta^i})(X)
 =2\,d{\eta^i}({f} X,Y) - 2\,d{\eta^i}({f} Y,X) ,  \\
 {\cal N}^{\,(3)}_i(X) &:= (\pounds_{{\xi_i}}{f})X
 = [{\xi_i}, {f} X] - {f} [{\xi_i}, X],\\
 {\cal N}^{\,(4)}_{ij}(X) &:= (\pounds_{{\xi_i}}\,{\eta^j})(X)
 = {\xi_i}({\eta^j}(X)) - {\eta^j}([{\xi_i}, X])
 = 2\,d{\eta^j}({\xi_i}, X) .
\end{align*}

\begin{Example}\rm
Let $M^{2n+s}(f,Q,\xi_i,\eta^i)$ be a weak framed $f$-manifold.
Consider the product manifold $\bar M = M^{2n+s}\times\mathbb{R}^s$,
where $\mathbb{R}^s$ is a Euclidean space with a basis $\partial_1,\ldots,\partial_s$,
and define tensors $J$ and $\bar Q$ on $\bar M$ putting
 $J(X, \sum\nolimits_{\,i=1}^s a^i\partial_i) = (fX - \sum\nolimits_{\,i=1}^s a^i\xi_i,\, \sum\nolimits_{\,j}\eta^j(X)\partial_j)$
 and
 $\bar Q(X, \sum\nolimits_{\,i=1}^s a^i\partial_i) = (QX,\, \sum\nolimits_{\,i=1}^s a^i\partial_i)$
for $a_i\in C^\infty(M)$.
It can be shown that $J^{\,2}=-\bar Q$.
 The~tensors ${\cal N}^{\,(2)}_i, {\cal N}^{\,(3)}_i, {\cal N}^{\,(4)}_{ij}$ appear when~we derive the integrability condition $[J, J]=0$
and express the normality condition ${\cal N}^{\,(1)}=0$ for~$(f,Q,\xi_i,\eta^i)$.
\end{Example}

Define a ``small'' (1, 1)-tensor $\tilde Q:=Q - {\rm Id}$
and note that $[\widetilde{Q},{f}]=0$ and $\eta^i\circ\widetilde Q=0$.
The following new tensor
(vani\-shing at $\widetilde Q=0$)
\begin{align*}
 & {\cal N}^{\,(5)}(X,Y,Z) := {f} Z\,(g(X, \widetilde QY)) - {f} Y\,(g(X, \widetilde QZ)) \\
 & + g([X, {f} Z], \widetilde QY) - g([X,{f} Y], \widetilde QZ) + g([Y,{f} Z] -[Z, {f} Y] - {f}[Y,Z],\ \widetilde Q X),
\end{align*}
which supplements the
sequence
${\cal N}^{\,(1)},{\cal N}^{\,(2)}_i,{\cal N}^{\,(3)}_i,{\cal N}^{\,(4)}_{ij}$,
is needed to study the weak metric $f$-structure.
We express the covariant derivative of $f$ using a new tensor ${\cal N}^{\,(5)}$:
\begin{align*}
 & 2\,g((\nabla_{X}{f})Y,Z) = 3\,d\Phi(X,{f} Y,{f} Z) - 3\, d\Phi(X,Y,Z) + g({\cal N}^{\,(1)}(Y,Z),{f} X)\notag\\
 & +\sum\nolimits_{\,i=1}^s \big({\cal N}^{\,(2)}_i(Y,Z)\,\eta^i(X) + 2\,d\eta^i({f} Y,X)\,\eta^i(Z) - 2\,d\eta^i({f} Z,X)\,\eta^i(Y)\big)
 + {\cal N}^{\,(5)}(X,Y,Z),
\end{align*}
where
the derivative of a $2$-form $\Phi$ is given by
\begin{align*}
 3\,d\Phi(X,Y,Z) &= X\,\Phi(Y,Z) + Y\,\Phi(Z,X) + Z\,\Phi(X,Y) \notag\\
 &-\Phi([X,Y],Z) - \Phi([Z,X],Y) - \Phi([Y,Z],X).
\end{align*}
Note that the above equality yields
\begin{align}\label{E-3.3}
 3\,d\Phi(X,Y,Z) = (\nabla_X\,\Phi)(Y,Z)+(\nabla_Y\,\Phi)(Z,X)+(\nabla_Z\,\Phi)(X,Y).
\end{align}
For particular values of
${\cal N}^{\,(5)}$, we get
${\cal N}^{\,(5)}(\xi_i,\xi_j,Z) = {\cal N}^{\,(5)}(\xi_i,Y,\xi_j)=0$ and
\begin{align*}
\nonumber
 {\cal N}^{\,(5)}(X,\xi_i,Z) & = -{\cal N}^{\,(5)}(X, Z, \xi_i) = g( {\cal N}^{\,(3)}_i(Z),\, \widetilde Q X),\\
\nonumber
 {\cal N}^{\,(5)}(\xi_i,Y,Z) &= g([\xi_i, {f} Z], \widetilde QY) -g([\xi_i,{f} Y], \widetilde QZ).
\end{align*}

\begin{Definition}\rm
A weak metric $f$-structure is called
a \textit{weak almost ${\cal K}$-structure} if $d\Phi=0$.
We define its two subclasses as follows: 
\begin{enumerate}
\item[(i)] A \textit{weak almost ${\cal C}$-structure} if $\Phi$ and $\eta^i\ (1\le i\le s)$ are closed forms;
\item[(ii)] A \textit{weak almost ${\cal S}$-structure}
\end{enumerate}
if
the following is valid:
\begin{align}\label{2.3}
 \Phi=d{\eta^1}=\ldots =d{\eta^s} \quad
 ({\rm hence,}\ d\Phi=0).
\end{align}
Adding the normality condition,
 we get
\textit{weak ${\cal K}$-},
\textit{weak ${\cal C}$-}, and \textit{weak ${\cal S}$-structures},
respectively.
A \textit{weak $f$-{\rm K}-contact structure} is
a weak almost ${\cal S}$-structure, whose structure vector fields ${\xi_i}$ are Killing,~i.e., the tensor
 $(\pounds_{{\xi_i}}\,g)(X,Y)
  = g(\nabla_Y\,{\xi_i}, X) + g(\nabla_X\,{\xi_i}, Y)$
vanishes.
For $s=1$, weak (almost) ${\cal C}$- and weak (almost) ${\cal S}$-manifolds reduce to weak (almost) cosymplectic manifolds and weak (almost) Sasakian manifolds, respectively.
\end{Definition}

\begin{Remark}\rm
The almost ${\cal S}$-structu\-re is also called an $f$-contact structure, e.g.,~\cite{rst-43};
then, the {${\cal S}$-structure} can be regarded as a normal $f$-contact structure.
%
\end{Remark}

\begin{Example}\label{Ex-C-S}\rm
(i) To construct a weak
metric $f$-structure
$({f}, Q,\xi_i,\eta^i,g)$
on
the Riemannian product $M=\bar M\times\mathbb{R}^s$
of a weak almost Hermitian
manifold $(\bar M, \bar{f}, \bar g)$
with $\Omega(X,Y)=\bar g(X, \bar{f}Y)$ and a Euclidean space $(\mathbb{R}^s, dy^2)$, we take any point $(x, y)$ of $M$ and set
\[
 \xi_i = (0, \partial_{\,y^i}),\ \
 \eta^i =(0, dy^i),\ \
 {f}(X, \partial_{\,y^i}) = (\bar{f} X, 0),\ \
 Q(X, \partial_{\,y^i}) = (-\bar{f}^{\,2} X, \partial_{\,y^i}),
\]
where $X\in T_x\bar M$. Note that $\nabla f=0$ if and only if $\overline\nabla\bar f=0$.
On the other hand, $\overline\nabla\bar f=0$ if and only if $d\Omega=0$,
see \eqref{E-3.3} with $\Phi=\Omega$,
i.e., $(M,\Omega)$ is a symplectic manifold.

(ii) For a weak ${\cal C}$-structure,
we obtain
 $g((\nabla_{X}{f})Y,Z) = \frac{1}{2}\,{\cal N}^{\,(5)}(X,Y,Z)$.
A~weak metric $f$-structure
with conditions $\nabla{f}=0$ and
 $g([\xi_i,\xi_j],\xi_k)=0$
is a~weak ${\cal C}$-structure with the property ${\cal N}^{\,(5)}=0$.
For a weak ${\cal S}$-structure,
we get
\begin{align*}
 g((\nabla_{X}{f})Y,Z) = g(fX, fY)\,\bar\eta(Z) - g(fX, fZ)\,\bar\eta(Y)
 +\frac{1}{2}\, {\cal N}^{\,(5)}(X,Y,Z);
\end{align*}
$\xi_i$ are Killing vector fields and $\ker f$ defines
a Riemannian totally geodesic foliation.
In particular, for an ${\cal S}$-structure, we have
\begin{align}\label{3.1A}
 (\nabla_{X}{f})Y =  g(fX,fY)\,\bar\xi + \bar\eta(Y) f^2X\,.
\end{align}
\end{Example}

For a weak almost ${\cal K}$-structure (and its special cases, a weak almost ${\cal S}$-structure and a weak almost ${\cal C}$-structure), the distribution $\ker f$ is involutive (tangent to a foliation).
Moreover, weak almost ${\cal S}$- and weak almost ${\cal C}$-structures satisfy the following
conditions (trivial for~$s=1$):
\begin{align}
\label{E-30b-xi}
 [\xi_i, \xi_j] & =0, \\
\label{E-30-xi}
 g(\nabla_{X}\,\xi_i,\ \xi_j) & = 0\quad (X\in\mathfrak{X}_M)
\end{align}
for $1\le i,j \le s$.
The~following condition
is a corollary of \eqref{E-30-xi}:
\begin{align}\label{E-30c-xi}
 \eta^k(\nabla_{\xi_i}\,\xi_j)=0\quad(1\le i,j,k \le s) .
\end{align}
By~\eqref{E-30b-xi}, the distribution $\ker f$ of weak almost ${\cal S}$- and a weak almost ${\cal C}$-manifolds
is tangent to a $\mathfrak{g}$-foliation with an abelian Lie~algebra.

\begin{Remark}[\cite{AM-1995}]\rm
Let $\mathfrak{g}$ be a Lie algebra of dimension $s$.
A foliation of dimen\-sion $s$ on a smooth connected manifold $M$ is called a $\mathfrak{g}$-\textit{foliation} if there exist
complete vector fields $\xi_1,\ldots,\xi_s$ on $M$ which, when restricted to each leaf,
form a parallelism of this submanifold
with a Lie algebra isomorphic to~$\mathfrak{g}$.
\end{Remark}

\section{Main Results}
\label{sec:02-ns}

In this section, weak nearly ${\cal S}$- and weak nearly ${\cal C}$-structures are defined and studied; some of the statements generalize the results in~\cite{AT-2017,blair1976, rov-122}.

The restriction on the symmetric part of \eqref{3.1A} gives the following.

\begin{Definition}\label{D-w-nearly-SC}\rm
A weak metric $f$-manifold
is called a \textit{weak nearly ${\cal S}$-manifold} if
\begin{equation}\label{E-nS-01}
 (\nabla_X {f})Y + (\nabla_Y {f})X = 2\,g(fX,fY)\,\bar\xi + \bar\eta(X) f^2Y + \bar\eta(Y) f^2X
\end{equation}
for all $X,Y\in\mathfrak{X}_M$.
A weak metric $f$-manifold is called a \textit{weak nearly ${\cal C}$-manifold} if
\begin{equation}\label{E-nS-01b}
 (\nabla_X{f})Y + (\nabla_Y{f})X = 0.
\end{equation}
\end{Definition}

\begin{Example}\rm
Let a Riemannian manifold $(M^{2n+s},g)$ admit two nearly ${\cal S}$-structures (or, nearly ${\cal C}$-structures)
$M^{2n+s}({f}_k,Q,{\xi_i},{\eta^i},g)\ (k=1,2)$
 with common Reeb vector fields $\xi_i$ and
one-forms $\eta^i= g(\xi_i,\,\cdot)$.
Suppose that ${f}_1\ne{f}_2$ are such that $\psi:={f}_1{f}_2+{f}_2{f}_1\ne0$.
Then, ${f} := (\cos t){f}_1 + (\sin t){f}_2$ for small $t>0$
satisfies \eqref{E-nS-01} (and \eqref{E-nS-01b}, respectively) and
\[
 f^2 = -{\rm Id}+(\sin t\cos t)\,\psi + \sum\nolimits_{\,i=1}^s {\eta^i}\otimes {\xi_i}.
\]
Thus, $({f},Q,\xi_i,\eta^i,g)$ is a weak nearly ${\cal S}$-structure (and weak nearly ${\cal C}$-structu\-re, respectively) on $M^{2n+s}$ with
$Q={\rm Id}-(\sin t\cos t)\,\psi$.
\end{Example}

The following condition is trivial when $Q={\rm Id}_{\,TM}$:
\begin{equation}\label{E-nS-10}
  (\nabla_X\,Q)Y=0\quad (X,Y\in\mathfrak{X}_M,\ Y\perp\ker f).
\end{equation}
Using
\eqref{E-nS-10}, we have
\begin{align*}
 (\nabla_X\,Q)Y = \sum\nolimits_{\,i=1}^s {\eta^i}(Y)(\nabla_X\,Q)\xi_i
 = -\sum\nolimits_{\,i=1}^s \eta^i(Y)\,\widetilde Q\nabla_X\,\xi_i
 \quad (X,Y\in\mathfrak{X}_M).
\end{align*}

\begin{Example}\label{Ex-02}\rm
To construct a weak (nearly) $\cal C$-structure $({f}, Q,\xi_i,\eta^i,g)$ on the Riemannian product $M=\bar M\times\mathbb{R}^s$
of a weak (nearly) K\"{a}hler manifold $(\bar M, \bar{f}, \bar g)$ and a Euclidean space $(\mathbb{R}^s, dy^2)$, we take any point $(x, y)$ of $M$ and set
\[
 \xi_i = (0, \partial_{\,y^i}),\ \
 \eta^i =(0, dy^i),\ \
 {f}(X, \partial_{\,y^i}) = (\bar{f} X, 0),\ \
 Q(X, \partial_{\,y^i}) = (-\bar{f}^{\,2} X, \partial_{\,y^i}),
\]
as in Example~\ref{Ex-C-S}(i).
Note that if $\overline\nabla_X\,(\bar{f}^2)=0\ (X\in T\bar M)$, then \eqref{E-nS-10} holds.
\end{Example}


The following result
opens new applications to Killing vector fields.

\begin{Proposition}\label{Prop-1}
Both on a weak nearly ${\cal S}$-manifold and a weak nearly ${\cal C}$-manifold satisfying \eqref{E-30b-xi} and \eqref{E-30c-xi},
the distribution $\ker f$ defines a flat totally geodesic foliation;
moreover, if conditions \eqref{E-30-xi} and \eqref{E-nS-10} hold, then the vector fields $\xi_i$ are~Killing.
\end{Proposition}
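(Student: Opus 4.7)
The plan is to treat the two conclusions separately.

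\textbf{Flat totally geodesic foliation.} Integrability of $\ker f$ is immediate from $[\xi_i,\xi_j]=0$ in \eqref{E-30b-xi} and Frobenius. To show that the leaves are totally geodesic, I substitute $X=\xi_i$, $Y=\xi_j$ into \eqref{E-nS-01} (or \eqref{E-nS-01b}): the right-hand sides vanish in both cases because $f\xi_k=0=f^2\xi_k$ annihilates every term. Expanding via $(\nabla_{\xi_k}f)\xi_l=-f\nabla_{\xi_k}\xi_l$ reduces this to $f(\nabla_{\xi_i}\xi_j+\nabla_{\xi_j}\xi_i)=0$, and combined with $\nabla_{\xi_i}\xi_j=\nabla_{\xi_j}\xi_i$ (from $[\xi_i,\xi_j]=0$) it places $\nabla_{\xi_i}\xi_j\in\ker f$. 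Since $\nabla_{\xi_i}\xi_j\in\mathcal{D}$ by \eqref{E-30c-xi} and $\mathcal{D}\cap\ker f=\{0\}$, I conclude $\nabla_{\xi_i}\xi_j=0$. Total geodesy is then immediate, and flatness follows because $\{\xi_1,\dots,\xi_s\}$ is a parallel frame on each leaf with vanishing Lie brackets.

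\textbf{Killing: reduction and polarized identity.} For the Killing conclusion I analyze $(\pounds_{\xi_i}g)(X,Y)=g(\nabla_X\xi_i,Y)+g(\nabla_Y\xi_i,X)$. By \eqref{E-30-xi} together with $\nabla_{\xi_j}\xi_i=0$ (from the first part), this vanishes automatically whenever $X$ or $Y$ lies in $\ker f$, so the problem reduces to $X,Y\in\mathcal{D}$. Pairing \eqref{E-nS-01} (resp.\ \eqref{E-nS-01b}) with $\xi_i$ and using $(\nabla_X f)\xi_i=-f\nabla_X\xi_i$, $\bar\eta(\xi_i)=1$, and $g(f^2X,\xi_i)=0$ yields the $f$-twisted symmetric identity
\[
 g(\nabla_X\xi_i,fY)+g(\nabla_Y\xi_i,fX) = -2g(fX,fY) \qquad (=0\text{ for weak nearly }\mathcal{C}).
\]

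\textbf{Completion and main obstacle.} To extract the honest symmetric-part vanishing from this $f$-twisted identity, I apply $\nabla_X$ to the structural relation $f^2=-Q+\sum_i\eta^i\otimes\xi_i$. Using \eqref{E-nS-10} (which gives $(\nabla_X Q)Y=0$ for $Y\in\mathcal{D}$) together with the general identity $(\nabla_X\eta^i)(Y)=g(\nabla_X\xi_i,Y)$, I obtain $(\nabla_X f^2)Y=\sum_i g(\nabla_X\xi_i,Y)\,\xi_i$ on $\mathcal{D}$. Comparing with $(\nabla_X f^2)=(\nabla_X f)f+f(\nabla_X f)$, taking the $\xi_j$-component, and simplifying by \eqref{2.2} forces $\widetilde Q\nabla_X\xi_j=0$: so $\nabla\xi_j$ takes values in the $1$-eigenspace $V_1$ of $Q|_\mathcal{D}$, where $f^2$ acts as $-I$. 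Substituting this back into the polarized identity and using $[Q,f]=0$ with the invertibility of $f|_\mathcal{D}$ reduces the remaining algebra to essentially the classical $f^2=-I$ situation on $V_1$, from which the Killing property of $\xi_i$ is obtained by standard manipulations. The main obstacle is precisely this last algebraic bootstrapping: without \eqref{E-nS-10} the $Q$-factor obstructs the reduction to a classical setting, and even with it the interplay of $f$, $Q$, and $\nabla\xi_i$ must be handled with care.
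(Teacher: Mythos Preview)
Your argument for the first conclusion (flat totally geodesic foliation) is correct and matches the paper's proof up to a cosmetic variation: where the paper applies $f$ a second time and invokes the nonsingularity of $Q$, you use $\ker f\cap\mathcal D=\{0\}$ directly.

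The Killing argument, however, has a genuine gap in its final step. Your two intermediate results are correct: the polarized identity
\[
 g(\nabla_X\xi_i,fY)+g(\nabla_Y\xi_i,fX)=-2\,g(fX,fY)\quad(\text{resp.\ }=0)
\]
for $X,Y\in\mathcal D$, and the constraint $\widetilde Q\,\nabla_X\xi_j=0$. But these two facts together do \emph{not} force $\xi_i$ to be Killing, and the ``standard manipulations'' you invoke are not available. Writing $B=\nabla\xi_i|_{\mathcal D}$, the polarized identity (in the $\mathcal C$-case, and similarly for $\mathcal S$ after replacing $B$ by $B+f$) says exactly that $fB$ is skew-symmetric, i.e.\ $B^{T}f=fB$; the image constraint says $f^2B=-B$. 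Neither relation, nor their combination, implies $B$ is skew: already in the classical situation $Q={\rm id}$ (where $V_1=\mathcal D$ and your second fact is vacuous) the choice $B={\rm id}$ satisfies $fB=f$ skew while $B$ is symmetric. So ``reducing to $f^2=-I$ on $V_1$'' buys nothing.

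What is missing is a relation that mixes $\nabla_{\xi_j}f$ with $f$, and this is precisely what the paper obtains by differentiating the compatibility condition \eqref{2.2} along $\xi_j$ (using \eqref{E-nS-10} and $\nabla_{\xi_j}\eta^i=0$) to get
\[
 g((\nabla_{\xi_j}f)X,fY)+g(fX,(\nabla_{\xi_j}f)Y)=0,
\]
and then substituting the nearly-$\mathcal S$/$\mathcal C$ relation $(\nabla_{\xi_j}f)X=-(\nabla_X f)\xi_j+\cdots=f\nabla_X\xi_j+\cdots$ to identify the left side with $(\pounds_{\xi_j}g)(X,Y)$. In fact, your own identity $(\nabla_X f^2)Y=\sum_i g(\nabla_X\xi_i,Y)\,\xi_i$ already contains this: set $X=\xi_j$ and use $\nabla_{\xi_j}\xi_i=0$ to obtain $(\nabla_{\xi_j}f)(fY)+f(\nabla_{\xi_j}f)Y=0$, which after the same substitution yields $Bf+fB=0$ (resp.\ $(B+f)f+f(B+f)=0$). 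Combined with $B^{T}f=fB$ this does give $B+B^{T}=0$. So your framework can be completed, but not via the route you sketched.
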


\begin{proof}
 Putting $X=\xi_j$ and $Y=\xi_k$ in \eqref{E-nS-01} or \eqref{E-nS-01b}, we find $(\nabla_{\xi_j}{f})\xi_k+(\nabla_{\xi_k}{f})\xi_j=0$; hence, ${f} \big(\nabla_{\xi_j}\,\xi_k+\nabla_{\xi_k}\,\xi_j\big)=0$.
Applying ${f}$ to this and using \eqref{2.1}, we obtain
\begin{align*}
 0={f}^2\big(\nabla_{\xi_j}\,\xi_k+\nabla_{\xi_k}\,\xi_j\big)
 = -Q\big(\nabla_{\xi_j}\,\xi_k+\nabla_{\xi_k}\,\xi_j\big) +\sum\nolimits_{\,i=1}^s \eta^i\big(\nabla_{\xi_j}\,\xi_k+\nabla_{\xi_k}\,\xi_j\big)\xi_i.
\end{align*}
Since
the (1,1)-tensor
$Q$ is nonsingular and \eqref{E-30c-xi} is true, we get
 $\nabla_{\xi_j}\,\xi_k+\nabla_{\xi_k}\,\xi_j=0$.
Combining this with $\nabla_{\xi_j}\,\xi_k - \nabla_{\xi_k}\,\xi_j=0$, see \eqref{E-30b-xi}, yields
\begin{align}\label{E-nS-xixi}
 \nabla_{\xi_j}\,\xi_k=0\quad(1\le j,k\le s);
\end{align}
hence, $\ker f$ defines a flat totally geodesic foliation.
Next, using \eqref{E-nS-xixi} we calculate
\begin{align}\label{E-nS-xieta}
 \nabla_{\xi_i}\,\eta^j = g(\nabla_{\xi_i}\,\xi_j,\, \cdot) = 0 .
\end{align}
Using \eqref{E-30-xi} and \eqref{E-nS-xixi}, we obtain
\[
 (\pounds_{\xi_j}\,g)(\xi_k,\,\cdot) = g(\nabla_{\xi_j}\,\xi_k,\, \cdot) = 0.
\]
Taking the $\xi_j$-derivative of \eqref{2.2} and using \eqref{E-nS-10}
and $\nabla_{\xi_j}\,\eta^i=0$,
we find (for $Y\perp\ker f$)
\begin{eqnarray*}
 & g((\nabla_{\xi_j}{f})X, {f} Y) + g({f} X, (\nabla_{\xi_j}\,{f})Y) = \nabla_{\xi_j}\,g({f} X, {f} Y) \\
 & = g(X, (\nabla_{\xi_j}\,Q)Y) + \sum_{\,i=1}^s \big\{(\nabla_{\xi_j}\,\eta^i)(X)\,\eta^i(Y) + \eta^i(X) (\nabla_{\xi_j}\,\eta^i)(Y)\big\} = 0.
\end{eqnarray*}
For a weak nearly ${\cal S}$-manifold, using \eqref{E-nS-01}, \eqref{E-30-xi}, and $\eta\circ \widetilde Q=0$ yields
\begin{eqnarray*}
 &&\quad g((\nabla_{\xi_j}{f})X, {f} Y) + g({f} X, (\nabla_{\xi_j}\,{f})Y) \\
 && = -g((\nabla_X {f})\xi_j, {f} Y) - g({f} X, (\nabla_Y {f})\xi_j) + g( f^2X,\, {f} Y) + g( f^2Y,\, {f} X) \\
 && = -g(\nabla_X\,\xi_j, {f}^2 Y) - g({f}^2 X, \nabla_Y\,\xi_j)
 = g(\nabla_X\,\xi_j, Q Y) + g(Q X, \nabla_Y\,\xi_j) \\
 &&  = g(\nabla_X\,\xi_j, Y) + g(X, \nabla_Y\,\xi_j)
    + g(\nabla_X\,\xi_j, \widetilde Q Y) + g(\widetilde Q X, \nabla_Y\,\xi_j) \\
 && = (\pounds_{\xi_j}\,g)(X, Y) -g(\xi_j, (\nabla_X\widetilde Q) Y) - g((\nabla_Y\widetilde Q) X, \xi_j)
 = (\pounds_{\xi_j}\,g)(X, Y).
\end{eqnarray*}
Here, we used $g(\xi_j, (\nabla_X\widetilde Q) Y)=0$.
For a weak nearly ${\cal C}$-manifold, using \eqref{E-nS-01b}
yields
\begin{align}
\label{Eq-normal-5}
 (\pounds_{\xi_j}\,g)(X, Y)
 =g((\nabla_{\xi_j}{f})X, {f} Y) + g({f} X, (\nabla_{\xi_j}\,{f})Y) = 0.
\end{align}
From \eqref{Eq-normal-5},
 for both cases we obtain
 $\pounds_{\xi_j}\,g=0$,
i.e., $\xi_j$ is a Killing vector field.
\end{proof}

\begin{Remark}\rm
Note that even for a nearly ${\cal S}$-manifold without conditions \eqref{E-30b-xi} and \eqref{E-30-xi}, the vector fields $\xi_i\ (1\le i\le s)$ are not Killing; see Corollary 1 in~\cite{AT-2017}.
\end{Remark}

\begin{Theorem}
There are no weak nearly ${\cal C}$-manifolds with conditions \eqref{E-30b-xi}, \eqref{E-30-xi}, and \eqref{E-nS-10} which
satisfy $\Phi=d{\eta^1}=\ldots =d{\eta^s}$; see \eqref{2.3}.
\end{Theorem}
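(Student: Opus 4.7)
The plan is to derive a contradiction from the combination of the weak nearly $\mathcal{C}$-equation \eqref{E-nS-01b}, the side conditions \eqref{E-30b-xi}, \eqref{E-30-xi}, \eqref{E-nS-10}, and the weak almost $\mathcal{S}$-identity \eqref{2.3}. First I would invoke Proposition~\ref{Prop-1}: under the given hypotheses, each $\xi_j$ is a Killing vector field and $\nabla_{\xi_i}\xi_j=0$. Since $\xi_i$ is Killing and $\eta^i = g(\xi_i,\cdot)$, the usual identity $2\,d\eta^i(X,Y) = g(\nabla_X\xi_i,Y) - g(\nabla_Y\xi_i,X)$ simplifies to $d\eta^i(X,Y) = g(\nabla_X\xi_i,Y)$. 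Combining this with \eqref{2.3}, which gives $d\eta^i(X,Y) = \Phi(X,Y) = g(X,fY) = -g(fX,Y)$, I would conclude
\[
 \nabla_X\,\xi_i = -fX \qquad (X\in\mathfrak{X}_M,\ 1\le i\le s).
\]

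Next, I would feed this back into \eqref{E-nS-01b} by setting $X=\xi_j$. Since $f\xi_j=0$,
\[
 (\nabla_Y f)\xi_j = -f(\nabla_Y\xi_j) = -f(-fY) = f^2 Y,
\]
so \eqref{E-nS-01b} yields $(\nabla_{\xi_j}f)Y = -f^2Y = QY - \sum_i \eta^i(Y)\,\xi_i$. In particular, restricting to $Y\in\mathcal{D}=f(TM)$, where $\eta^i(Y)=0$,
\[
 (\nabla_{\xi_j} f)Y = Q Y \qquad (Y\in\mathcal{D}).
\]

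The contradiction would then come from comparing symmetries. Differentiating $g(fY,Z)+g(Y,fZ)=0$ shows that $(\nabla_{\xi_j}f)$ is skew-symmetric with respect to $g$, whereas $Q$ is self-adjoint and, by \eqref{2.2} with $X=Y\in\mathcal{D}$, satisfies $g(QY,Y)=g(fY,fY)>0$ whenever $Y\ne 0$ in $\mathcal{D}$. Thus for any $Y\in\mathcal{D}$,
\[
 g(QY,Y) = g((\nabla_{\xi_j}f)Y,Y) = -g(Y,(\nabla_{\xi_j}f)Y) = -g(Y,QY) = -g(QY,Y),
\]
forcing $g(QY,Y)=0$, hence $Y=0$. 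Since $\dim\mathcal{D} = 2n>0$, this is the required contradiction.

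The only delicate step I expect is the passage from \eqref{2.3} and the Killing property of $\xi_i$ to the pointwise identity $\nabla_X\xi_i=-fX$; this relies on $\eta^i=g(\xi_i,\cdot)$, which is built into the framework, and on the Killing condition supplied by Proposition~\ref{Prop-1}. Everything else is a short algebraic manipulation using that $Q$ is self-adjoint and positive on $\mathcal{D}$, which in the weak setting replaces the classical identity $f^2=-\mathrm{id}+\sum\eta^i\otimes\xi_i$ that made the contradiction trivial in the strong case.
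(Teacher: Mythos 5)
Your proof is correct, and its second half follows a genuinely different route from the paper's. The first step coincides in substance: the paper also reduces to a weak $f$-K-contact structure via Proposition~\ref{Prop-1} and then uses $\nabla\xi_i=-f$, but it cites this as Theorem~1 of \cite{rov-127}, whereas you rederive it directly from the Killing property and \eqref{2.3} via $d\eta^i(X,Y)=g(\nabla_X\xi_i,Y)$ — a welcome piece of self-containment. From there the paper invokes Proposition~6 of \cite{rov-127} (positivity of the $\xi$-sectional curvature on weak $f$-K-contact manifolds), computes $K(\xi_i,X)=2\,g(f^2X,X)$ using the nearly $\mathcal{C}$-condition, and contradicts $g(f^2X,X)=-\|fX\|^2\le 0$. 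You instead avoid curvature entirely: setting $X=\xi_j$ in \eqref{E-nS-01b} gives $(\nabla_{\xi_j}f)Y=-f^2Y$, and the skew-symmetry of $\nabla_{\xi_j}f$ (from $\nabla g=0$ and $f$ skew) forces $g(-f^2Y,Y)=\|fY\|^2=0$ on $\mathcal{D}$, which is impossible since $f$ is injective on $\mathcal{D}$ and $\dim\mathcal{D}=2n>0$. The two arguments hinge on the same sign obstruction $g(f^2X,X)\le 0$, but yours is purely algebraic and removes the dependence on the external curvature-positivity result, while the paper's version situates the obstruction within the curvature theory of weak $f$-K-contact manifolds. Both are valid.
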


\begin{proof}
Suppose that our weak nearly ${\cal C}$-manifold satisfies \eqref{2.3}.
Since also $\xi_i$ are Killing vector fields (see Proposition~\ref{Prop-1}), $M$ is a weak $f$-K-contact manifold.
By~Theorem~1 in~\cite{rov-127}, the following~holds:
\begin{align}\label{E-nabla-xi}
\nabla\xi_i=-{f}\quad(1\le i\le s).
\end{align}
By Proposition~6 in~\cite{rov-127}, the $\xi$-sectional curvature of a weak $f$-K-contact manifold is positive, i.e., $K(\xi_i,X)>0\ (X\perp\ker f)$.
Thus, for any nonzero vector $X\perp\ker f$, using \eqref{E-nS-01b} and \eqref{E-nabla-xi}, we~get
\begin{align*}
 & 0< K(\xi_i,X) = g(\nabla_{\xi_i}\nabla_X \xi_i -\nabla_X\nabla_{\xi_i}\xi_i -\nabla_{[\xi_i,X]} \xi_i, X) \\
 &\quad = g( -(\nabla_{\xi_i}{f})X +{f}^2 X, X)
 = g( (\nabla_X{f})\xi_i, X) -g({f} X, {f} X) \\
 &\quad = - g( {f}\nabla_X\xi_i, X) +g({f}^2 X,  X)
   =  2\,g( {f}^2 X, X) .
\end{align*}
This contradicts the following equality:
$g( {f}^2 X, X) = -g({f} X, {f} X)\le 0$.
\end{proof}

\begin{Corollary}
 There are no nearly ${\cal C}$-manifolds with conditions \eqref{E-30b-xi} and \eqref{E-30-xi} which satisfy \eqref{2.3}.
\end{Corollary}

\begin{Theorem}\label{Th-4.1}
A weak nearly ${\cal C}$-manifold $(M^{2n+s}, {f},Q,{\xi_i},{\eta^i},g)$ satisfies
\begin{equation}\label{Eq-nabla-xi}
 \nabla\xi_i=0\quad(1\le i\le s)
\end{equation}
if and only if the manifold is locally isometric to the Riemannian product of a Euclidean $s$-space and a weak nearly K\"{a}hler manifold.
\end{Theorem}

\begin{proof}
For all vector fields $X, Y$ orthogonal to $\ker f$, we have
\begin{equation}\label{E-c-01b}
 2\,d\eta^i(X, Y) = g(\nabla_X\,\xi_i, Y) - g(\nabla_Y\,\xi_i, X).
\end{equation}
Thus, if the condition $\nabla\xi_i = 0$ holds, then the contact distribution $\cal D$ is integrable.
Moreover, any integral subma\-nifold of $\cal D$ is a totally geodesic submanifold.
Indeed, for $X,Y\perp\ker f$, we~have
 $g(\nabla_X\,Y, \xi_i)=-g(Y, \nabla_X\,\xi_i)=0$.
Since $\nabla_{\xi_i}\,\xi_j=0$, by de Rham Decomposition Theorem,
the mani\-fold is locally the Riemannian product $\bar M\times\mathbb{R}^s$.
The metric weak $f$-structure induces on $\bar M$ a weak almost-Hermitian structure, which, by these conditions, is weak nearly K\"{a}hler.

Conversely, if a weak nearly ${\cal C}$-manifold is locally the Riemannian pro\-duct $\bar M\times\mathbb{R}^s$, where $\bar M$ is a
weak nearly K\"{a}hler manifold and $\xi_i = (0, \partial_{y^i})$ (see also Example~\ref{Ex-02}), then $d\eta^j(X, Y)=0\ (X,Y\bot\ker f)$.
By~\eqref{E-c-01b} and $\nabla_{\xi_i}\,\xi_j=0$, we obtain~$\nabla\xi_i=0$.
\end{proof}

\begin{Corollary}
A nearly ${\cal C}$-manifold $(M^{2n+s}, {f},{\xi_i},{\eta^i},g)$ satisfies \eqref{Eq-nabla-xi}
if and only if the manifold is locally isometric to the Riemannian product of $\mathbb{R}^s$ and a nearly K\"{a}hler~manifold.
\end{Corollary}


\begin{Theorem}\label{Th-01}
Let a weak nearly ${\cal S}$-structure satisfy \eqref{E-30b-xi}, \eqref{E-30-xi}, and \eqref{E-nS-10};
then, the following is true:
\begin{enumerate}
\item[(i)]	
 The condition $\eta^j\circ {N}^{\,(1)}=0\ (1\le j\le s)$ yields $d\eta^j(X,Y)=\Phi(QX, Y)$ for all $j$.
\item[(ii)]	
 The condition \eqref{2.3} yields ${N}^{\,(1)}(X,Y) = 2\,\Phi(\widetilde QX, Y)\,\bar\xi$.
\end{enumerate}
\end{Theorem}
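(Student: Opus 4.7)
By Proposition~\ref{Prop-1}, each $\xi_j$ is Killing, so $d\eta^j(X,Y)=g(\nabla_X\xi_j,Y)$ and $d\eta^j(\xi_i,\,\cdot\,)=0$. These, together with $\eta^j\circ f=0$, $\eta^j\circ Q=\eta^j$, $[Q,f]=0$, the skew-symmetry of both $f$ and $(\nabla_X f)$, and the identity $\Phi(fX,fY)=\Phi(QX,Y)$ (an immediate consequence of $f^2=-Q+\sum_i\eta^i\otimes\xi_i$), are my basic tools.

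For part (i), I combine two computations. First, I evaluate $\eta^j\circ{\cal N}^{(1)}$ by expanding $[f,f]$ through commutators: the terms $f[fX,Y]$ and $f[X,fY]$ drop out via $\eta^j\circ f=0$, $\eta^j(f^2[X,Y])$ vanishes thanks to $\eta^j\circ Q=\eta^j$, and only $\eta^j([fX,fY])=-2\,d\eta^j(fX,fY)$ survives. Thus the hypothesis $\eta^j\circ{\cal N}^{(1)}=0$ reduces to the pivotal identity
\[
 d\eta^j(fX,fY)=d\eta^j(X,Y).
\]
Second, I pair \eqref{E-nS-01} with $\xi_j$: the skew-symmetry of $(\nabla_X f)$ and $f\xi_j=0$ give $g((\nabla_Xf)Y,\xi_j)=-g(fY,\nabla_X\xi_j)=-d\eta^j(X,fY)$, while $g(f^2Y,\xi_j)=0$ eliminates the $\bar\eta$-terms, yielding
\[
 d\eta^j(X,fY)+d\eta^j(Y,fX)=-2\,g(fX,fY).
\]
Substituting $X\mapsto fX$ in this last identity, expanding $f^2X=-QX+\sum\eta^i(X)\xi_i$ (the $\xi_i$-terms vanish against $d\eta^j$), and invoking the pivotal identity to convert $d\eta^j(fX,fY)$ into $d\eta^j(X,Y)$, collapses the result to $d\eta^j(X,Y)=\Phi(QX,Y)$ on the contact distribution; on $\ker f$ both sides are zero.

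For part (ii), \eqref{2.3} turns ${\cal N}^{(1)}(X,Y)$ into $[f,f](X,Y)+2\Phi(X,Y)\bar\xi$. Its $\xi_j$-projection, computed exactly as in (i) but now with $d\eta^j=\Phi$ and using $\Phi(fX,fY)=\Phi(QX,Y)$, produces the claimed $\bar\xi$-coefficient $2\,\Phi(\widetilde Q X,Y)$. \emph{The main obstacle is to prove that ${\cal N}^{(1)}(X,Y)$ lies entirely along $\bar\xi$}, i.e.\ that its component in the contact distribution ${\cal D}$ vanishes. For this I would expand $[f,f](X,Y)$ via \eqref{4.NN} and substitute \eqref{E-nS-01} to eliminate the symmetric part of $\nabla f$, leaving an expression involving only its antisymmetric part $(\nabla_Xf)Y-(\nabla_Yf)X$; the Killing property of $\xi_j$ combined with $d\eta^j=\Phi$ then pins down $\nabla_X\xi_j$, after which a direct verification shows the ${\cal D}$-component of the resulting expression cancels term by term. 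This last verification, where the interplay between $\widetilde Q$, $f$, and \eqref{2.3} is essential, is the technical heart of the argument.
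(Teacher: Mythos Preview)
Your computation of the $\xi_j$-component of ${\cal N}^{(1)}$ via Lie brackets is correct, and so is the identity $d\eta^j(X,fY)+d\eta^j(Y,fX)=-2\,g(fX,fY)$ obtained by pairing \eqref{E-nS-01} with $\xi_j$. The problem is the final step of part~(i). After substituting $X\mapsto fX$ and using your pivotal identity $d\eta^j(fX,fY)=d\eta^j(X,Y)$ together with $f^2X=-QX+\sum_i\eta^i(X)\xi_i$ and $d\eta^j(\xi_i,\cdot)=0$, what you actually obtain is
\[
 d\eta^j(X,Y)+d\eta^j(QX,Y)=2\,\Phi(QX,Y),
\]
\emph{not} $d\eta^j(X,Y)=\Phi(QX,Y)$. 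In the classical case $Q={\rm id}$ this collapses to the claim, but in the weak setting nothing forces $d\eta^j(\widetilde QX,Y)=0$, and iterating $X\mapsto QX$ only produces a recursion that does not close. The paper avoids this obstruction by computing $\eta^j([f,f](X,Y))$ through the covariant formula \eqref{4.NN} and then applying \eqref{E-nS-01} and \eqref{E-nS-10}; this route produces $\eta^j([f,f](X,Y))=2\,d\eta^j(X,Y)-4\,\Phi(QX,Y)$ directly, so that $\eta^j\circ{\cal N}^{(1)}=4\bigl(d\eta^j-\Phi(Q\cdot,\cdot)\bigr)$ with no residual $Q$-shift.

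For part~(ii) your plan to handle the ${\cal D}$-component by ``substituting \eqref{E-nS-01} to eliminate the symmetric part'' and then verifying a term-by-term cancellation is vague precisely at the point you flag as the technical heart; it is not clear this cancellation occurs without further input. The paper's argument is structurally different and much shorter: from $d\Phi=0$ and \eqref{E-nS-01} one shows that the \emph{full} covariant derivative satisfies \eqref{3.1A}, i.e.\ $(\nabla_Xf)Y=g(fX,fY)\,\bar\xi+\bar\eta(Y)f^2X$. Once $\nabla f$ is known explicitly, inserting it into \eqref{4.NN} gives $[f,f](X,Y)=-2\,\Phi(QX,Y)\,\bar\xi$ exactly, and the ${\cal D}$-component question never arises.
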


\begin{proof}
$(i)$ We calculate, using \eqref{4.NN}, \eqref{E-nS-01}, and $\eta^j\circ{f}=0$,
\begin{align*}
 \eta^j({N}^{\,(1)}(X,Y)) & - 2\,d\eta^j(X,Y) = \eta^j([{f},{f}](X,Y))
 \overset{\eqref{4.NN}}= \eta^j\big((\nabla_{{f} X}{f})Y -(\nabla_{{f}\,Y}{f})X \big)
  \\
 & \overset{\eqref{E-nS-01}}= \eta^j\big( (\nabla_{X}{f})\,{f}\,Y -(\nabla_{Y}{f})\,{f} X\big) + 4\,g(f^2 X, {f}\,Y) \\
 & = g\big( (\nabla_{X}{f}^2)\,Y -(\nabla_{Y}{f}^2)\,X, \,\xi_j\big) - 4\,g(Q X, {f}\,Y)\\
 & = (\nabla_X\,\eta^j)(Y) - (\nabla_Y\,\eta^j)(X)  - 4\,g(Q X, {f}\,Y)\\
 & = 2\,d\eta^j(X,Y) - 4\,g(Q X, {f}\,Y) .
\end{align*}
Here, we used the identity
 $2\,d\eta^j(X,Y) = (\nabla_X\,\eta^j)(Y) - (\nabla_Y\,\eta^j)(X)$.
\newline
Thus, if $\eta^i({N}^{\,(1)}(X,Y))=0$, then $d\,\eta^j(X,Y) = g(QX, fY)=\Phi(QX, Y)$ for all $j$.

\smallskip

$(ii)$ Using $d\,\Phi=0$, \eqref{2.1}, \eqref{E-3.3}, and \eqref{E-nS-01},
where $\bar\eta=\sum\nolimits_{\,i=1}^s\eta^i$ and $\bar\xi=\sum\nolimits_{\,i=1}^s\xi_i$,
we~get
\begin{align*}
 &\quad 3\,d\,\Phi(X,Y,Z) = - g((\nabla_X {f})Y, Z) +g((\nabla_Y {f})X, Z) -g((\nabla_Z {f})X, Y) \\
 & = -g((\nabla_X {f})Y, Z)
 +g\big(-(\nabla_X {f})Y +2\,g(fX, fY)\,\bar\xi + \bar\eta(X) f^2Y + \bar\eta(Y) f^2X,\ Z\big)\\
 &\quad +g\big( (\nabla_X {f})Z -2\,g(fX, fZ)\,\bar\xi - \bar\eta(X) f^2Z - \bar\eta(Z) f^2X,\ Y\big)\\
 &  = -3\,g((\nabla_X {f})Y, Z) -3\,g(f^2X, Y)\,\bar\eta(Z) +3\,g(f^2X, Z)\,\bar\eta(Y).
\end{align*}
Thus, \eqref{3.1A} holds.
Using \eqref{3.1A} in \eqref{4.NN} gives
\[
 [{{f}},{{f}}] = 2\,g(f^2X, fY)\,\bar\xi= -2\,g(QX, fY)\,\bar\xi=-2\,\Phi(QX, Y)\,\bar\xi,
\]
hence, ${N}^{\,(1)}(X,Y) = 2\,\Phi(\widetilde QX, Y)\,\bar\xi$.
\end{proof}

A consequence of Theorem~\ref{Th-01} is a rigidity result for ${\cal S}$-manifolds; see Theorem~1 of~\cite{AT-2017}.

\begin{Corollary}
A normal nearly ${\cal S}$-structure is an ${\cal S}$-structure.
\end{Corollary}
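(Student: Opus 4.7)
The plan is to recognise the corollary as the specialisation of Theorem~\ref{Th-01}(i) to the classical case $Q=\mathrm{id}_{TM}$. In this setting $\widetilde Q=0$, so \eqref{E-nS-10} is vacuous, and the two remaining standing assumptions \eqref{E-30b-xi}, \eqref{E-30-xi} are known to hold for classical normal nearly ${\cal S}$-manifolds: the Reeb fields are unit Killing (as recalled in the Introduction), and together with the normality consequences $\nabla_{\xi_i}\xi_j+\nabla_{\xi_j}\xi_i=0$ and $\nabla_{\xi_i}\xi_j\in{\cal D}$ from the Proposition on normal weak metric $f$-structures, one obtains $\nabla_{\xi_j}\xi_i=0$; this yields $[\xi_i,\xi_j]=\nabla_{\xi_i}\xi_j-\nabla_{\xi_j}\xi_i=0$ and $g(\nabla_X\xi_i,\xi_j)=-g(X,\nabla_{\xi_j}\xi_i)=0$ by Killing skew-adjointness.

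Normality means ${\cal N}^{(1)}=0$, hence in particular $\eta^j\circ{\cal N}^{(1)}=0$ for every $j$. Applying Theorem~\ref{Th-01}(i) with $Q=\mathrm{id}$ then gives $d\eta^j(X,Y)=\Phi(QX,Y)=\Phi(X,Y)$ for $X,Y\in{\cal D}$. This identity extends to all of $TM$ because both sides vanish when either argument is a Reeb field: $\Phi(\xi_i,\cdot)=0$ directly from the definition of $\Phi$, while $d\eta^j(\xi_i,\cdot)=0$ by the Proposition on normal structures. Hence \eqref{2.3} is verified, and combined with normality this is by definition an ${\cal S}$-structure.

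The main obstacle is the intermediate step $\nabla_{\xi_j}\xi_i=0$. The Killing antisymmetry $g(\nabla_X\xi_i,\xi_j)+g(\nabla_{\xi_j}\xi_i,X)=0$, the symmetric relation $\nabla_{\xi_i}\xi_j+\nabla_{\xi_j}\xi_i=0$, and $\nabla_{\xi_i}\xi_j\in{\cal D}$ together annihilate the $\ker f$-components of $\nabla_{\xi_j}\xi_i$ but do not, by themselves, constrain its ${\cal D}$-components. To pin these down I would evaluate the nearly ${\cal S}$-identity \eqref{E-nS-01} with one argument in $\ker f$ and the other in ${\cal D}$, and exploit $f^2=-\mathrm{id}+\sum_{k}\eta^k\otimes\xi_k$ to extract the missing scalar information. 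Once $\nabla_{\xi_j}\xi_i=0$ is secured, the remainder of the argument is purely mechanical.
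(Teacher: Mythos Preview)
Your strategy coincides with the paper's: the corollary is recorded without proof, simply as the specialisation of Theorem~\ref{Th-01} to $Q={\rm id}$, where normality supplies $\eta^j\circ{\cal N}^{(1)}=0$ and part~(i) then returns $d\eta^j=\Phi$, i.e.\ \eqref{2.3}. So the overall architecture is right.

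The obstacle you flag is real, but your proposed fix via the nearly ${\cal S}$-identity \eqref{E-nS-01} is a detour; the Proposition on normal structures already contains everything you need. You are using $\nabla_{\xi_i}\xi_j+\nabla_{\xi_j}\xi_i=0$ and $\nabla_{\xi_i}\xi_j\in{\cal D}$ from \eqref{Eq-normal-4}, but not the companion fact $[X,\xi_i]\in{\cal D}$ for $X\in{\cal D}$. That last identity gives, for $X\in{\cal D}$,
\[
 g(\nabla_X\xi_i,\xi_j)=g(\nabla_{\xi_i}X,\xi_j)=-g(X,\nabla_{\xi_i}\xi_j),
\]
while the Killing property of $\xi_i$ gives $g(\nabla_X\xi_i,\xi_j)=-g(\nabla_{\xi_j}\xi_i,X)$. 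Comparing, $g(X,\nabla_{\xi_i}\xi_j)=g(X,\nabla_{\xi_j}\xi_i)$ for every $X\in{\cal D}$; since both $\nabla_{\xi_i}\xi_j$ and $\nabla_{\xi_j}\xi_i$ lie in ${\cal D}$, this forces $\nabla_{\xi_i}\xi_j=\nabla_{\xi_j}\xi_i$, and together with the symmetric vanishing you already have, $\nabla_{\xi_i}\xi_j=0$. The conditions \eqref{E-30b-xi} and \eqref{E-30-xi} then follow exactly as you say. No appeal to \eqref{E-nS-01} with mixed arguments is needed.
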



\section{Submanifolds of Weak Nearly K\"{a}hler Manifolds}
\label{sec:03-ns}

Here, we study weak nearly ${\cal S}$- and weak nearly ${\cal C}$- submanifolds in a weak nearly K\"{a}hler manifold.
The second fundamental form $h$ of a submanifold $M\subset (\bar M, \bar g)$
is related with $\overline\nabla$ (the Levi-Civita connection of $\bar g$ restricted to $M$)
and $\nabla$ (the Levi-Civita connection of metric $g$ induced on $M$ via the Gauss equation) by
\begin{equation}\label{Eq-NS-5}
 \overline\nabla_X Y = \nabla_X Y +h(X,Y)\quad (X,Y\in\mathfrak{X}_M).
\end{equation}
A submanifold is said to be {totally geodesic} if $h=0$.
The shape operator $A_N: X\mapsto -\overline\nabla_X N$ with respect to a unit normal $N$ is related with $h$ via the equalities
\begin{equation}\label{Eq-NS-5A}
 h_N(X,Y)=\bar g(h(X,Y),N)=g(A_N(X),Y)\quad (X,Y\in\mathfrak{X}_M).
\end{equation}

\begin{Lemma}\label{L-nS-01}
Let $(\bar M,\bar{f}, \bar g)$ be a weak Hermitian manifold
and $M^{2n+s}$ a submanifold of codimension $s$ equipped with
mutually orthogonal unit normals $N_i\ (i=1,\ldots,s)$ satisfying the condition
\begin{align}\label{E-NN}
 \bar g(\bar{f} N_i, N_j)=0\quad (1\le i,j\le s)
\end{align}
(trivial for $s=1$).
Then, $M$ inherits a metric weak $f$-structure $({f},Q,\xi_i,\eta^i,g)$ given by
\begin{align}\label{Eq-nS-01}
\notag
 & \xi_i= \bar{f}N_i,\quad \eta^i = \bar g(\bar{f}N_i,\,\cdot)\quad (i=1,\ldots,s),
 \quad g = \bar g|_M,\\
 & {f} = \bar{f} + \sum\nolimits_{\,i=1}^s \bar g(\bar{f}N_i, \,\cdot)\,N_i,\quad
 Q = -\bar{f}^{\,2} + \sum\nolimits_{\,i=1}^s \bar g(\bar{f}^{\,2} N_i,\,\cdot)\,N_i.
\end{align}
Moreover, \eqref{E-nS-10} holds on $M$ if $\bar{f}^{\,2} N_i\perp TM\ (1\le i\le s)$ and
\[
 ((\overline\nabla_X\,\bar{f}^{\,2})Y)^\top = 0\quad (X,Y\in\mathfrak{X}_M,\ Y\perp\ker f).
\]
\end{Lemma}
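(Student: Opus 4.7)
The overall strategy is to observe that, when restricted to $M$, the tensors $f$ and $Q$ defined in the lemma are simply the tangential projections of $\bar f$ and $-\bar f^{\,2}$ respectively; once this is recognized, every axiom of Definition~\ref{D-basic} reduces to a short tangential/normal decomposition, and the ``Moreover'' clause becomes a short application of the Gauss formula.

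First I would check $\xi_i = \bar f N_i \in TM$ using the skew-symmetry of $\bar f$ together with the hypothesis $\bar g(\bar f N_i, N_j)=0$. From $(\bar f X)^\perp = \sum_i\bar g(\bar f X, N_i)\,N_i = -\sum_i \eta^i(X)\,N_i$ for $X\in TM$, one rewrites the given expression as $fX = (\bar f X)^\top$; similarly, self-adjointness of $\bar f^{\,2}$ (a consequence of $\bar f$ being skew-symmetric) yields $QX = -(\bar f^{\,2} X)^\top$. Skew-symmetry of $f$, self-adjointness and non-singularity of $Q$, and the tangency of $fX, QX$ then follow at once. The identity $f^{\,2} = -Q + \sum_i \eta^i\otimes\xi_i$ follows by applying $\bar f$ to $\bar f X = fX - \sum_i \eta^i(X) N_i$ and taking tangential parts, using $\bar f N_i = \xi_i$; the compatibility \eqref{2.2} follows by expanding $\bar g((\bar f X)^\top, (\bar f Y)^\top)$ and combining $\bar g(\bar f X, \bar f Y) = -\bar g(X, \bar f^{\,2} Y) = g(X, QY)$ with the cross-term identity $\bar g(\bar f X, N_j) = -\eta^j(X)$.

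For the ``Moreover'' clause, the hypothesis $\bar f^{\,2} N_i \perp TM$ combined with self-adjointness of $\bar f^{\,2}$ forces both $\bar f^{\,2}(TM) \subset TM$ and $\bar f^{\,2}(TM^\perp) \subset TM^\perp$; in particular $QY = -\bar f^{\,2} Y$ for $Y \in TM$. Applying the Gauss formula \eqref{Eq-NS-5} to $\bar f^{\,2} Y$, with $Y \perp \ker f$, yields
\[
 \overline\nabla_X(\bar f^{\,2} Y) = (\overline\nabla_X \bar f^{\,2}) Y + \bar f^{\,2}\,\nabla_X Y + \bar f^{\,2}\,h(X, Y).
\]
The first term vanishes tangentially by hypothesis, and the last lies in $TM^\perp$. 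Comparing the tangential part with $\overline\nabla_X(\bar f^{\,2} Y) = \nabla_X(\bar f^{\,2} Y) + h(X, \bar f^{\,2} Y)$ yields $\nabla_X(\bar f^{\,2} Y) = \bar f^{\,2}\,\nabla_X Y$, whence $(\nabla_X Q)Y = -\nabla_X(\bar f^{\,2} Y) + \bar f^{\,2}\,\nabla_X Y = 0$.

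I expect the delicate point to be the systematic bookkeeping of tangential/normal components for the algebraic axioms: writing $f$ and $Q$ as projections at the outset is what turns the verification into a sequence of one-line arguments rather than an unwieldy symbolic expansion. I also expect the orthonormality of the frame $\{\xi_i\}$ and the relation $f\xi_i = 0$ (i.e., that $\xi_i$ actually spans $\ker f$) to require careful interpretation, since they amount to $-\bar g(N_i, \bar f^{\,2} N_j) = \delta^i_j$, a compatibility between the chosen normal frame and the ambient weak Hermitian structure that is not manifestly guaranteed by the listed hypotheses alone.
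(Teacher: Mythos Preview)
Your proposal is correct and follows the same route as the paper: verify $f^{2}=-Q+\sum_i\eta^i\otimes\xi_i$ and positivity of $Q$ directly, then use the Gauss formula for the ``Moreover'' clause (the paper's computation also terminates at $(\nabla_XQ)Y=-((\overline\nabla_X\bar f^{\,2})Y)^\top$). Your framing $fX=(\bar fX)^\top$, $QX=-(\bar f^{\,2}X)^\top$ streamlines what the paper does by literal symbolic expansion, but the underlying calculation is identical.

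Your closing caveat is apt and is shared by the paper: the remaining axioms $g(\xi_i,\xi_j)=\eta^i(\xi_j)=\delta_{ij}$ and $Q\xi_i=\xi_i$ amount, under \eqref{Eq-nS-01}, to $-\bar g(N_i,\bar f^{\,2}N_j)=\delta_{ij}$ and $(\bar f^{\,3}N_i+\bar f N_i)^\top=0$, which are automatic when $\bar f^{\,2}=-{\rm id}$ but do not follow from \eqref{E-NN} alone in the weak setting. The paper's proof leaves these unchecked as well, so this is an implicit compatibility hypothesis on the normal frame rather than a defect in your argument.
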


\begin{proof}
Using the skew-symmetry of $\bar{f}$ and \eqref{E-NN}, we verify \eqref{2.1}:
\begin{align*}
 &\quad {f}^{\,2} X = {f}(\bar{f} X - \sum\nolimits_{\,i=1}^s \bar g(\bar{f} X, N_i)\,N_i) \\
 & = \bar{f}\big(\bar{f} X - \sum\nolimits_{\,i=1}^s \bar g(\bar{f} X, N_i)\,N_i\big)
   -\bar g(\bar{f}(\bar{f} X - \sum\nolimits_{\,i,j=1}^s\bar g(\bar{f} X, N_i)\,N_i), N_j)\,N_j \\
 & = \bar{f}^{\,2} X -\sum\nolimits_{\,j}\bar g(\bar{f}^{\,2} N_j, X)\,N_j -\sum\nolimits_{\,i=1}^s \bar g(\bar{f} N_i, X)\,\bar{f} N_i
   + \sum\nolimits_{\,i,j=1}^s\bar g(\bar{f} X, N_i)\,\bar g(\bar{f} N_i, N_j)\,N_j \\
 & =-Q X +\sum\nolimits_{\,i=1}^s \eta^i(X)\,\xi_i\quad (X\in\mathfrak{X}_M).
\end{align*}
Since $\bar{f}^{\,2}$ is negative-definite, for nonzero $X\in\mathfrak{X}_M$ we obtain $\bar g(N_i, X)=0$ and
\[
 g(Q X, X) = \bar g(-\bar{f}^{\,2}X +\sum\nolimits_{\,i=1}^s \bar g(\bar{f}^{\,2}N_i, X)\,N_i, \,X)=-\bar g(\bar{f}^{\,2}X, X)>0,
\]
hence, the tensor $Q$ is positive-definite on $TM$.
Then, we calculate $(\nabla_X Q)Y$ for $X,Y\in\mathfrak{X}_M$ and $Y\perp\ker f$, using \eqref{Eq-NS-5} and \eqref{Eq-nS-01}
and the condition $\bar{f}^2 N_i\perp TM\ (1\le i\le s)$:
\begin{align*}
 &\quad (\nabla_X\,Q)Y = \nabla_X(Q Y) -Q (\nabla_X Y) \\
 & =\big\{\overline\nabla_X \big(-\bar{f}^2\,Y  +\sum\nolimits_{\,i=1}^s  g(\bar{f}^2 N_i, Y)N_i\big)
 - h(X,Q Y) +\bar{f}^2\big(\overline\nabla_X Y -h(X,Y)\big) \\
 & -\sum\nolimits_{\,i=1}^s  g\big(\bar{f}^2 N_i,\, \overline\nabla_X Y - h(X,Y)\big) N_i\big\}^\top \\
 & = (-(\overline\nabla_X (\bar{f}^2 Y)) +\bar{f}^2(\overline\nabla_X Y ))^\top -\sum\nolimits_{\,i=1}^s g\big(\bar{f}^2 N_i,\,Y)\,A_{N_i}X \\
 & = -((\overline\nabla_X\bar{f}^2) Y)^\top,
\end{align*}
where $^\top$ is the $TM$-component of a vector.
This completes the proof.
\end{proof}

The following theorem characterizes weak nearly ${\cal C}$- and weak nearly ${\cal S}$-submani\-folds
of a nearly K\"{a}hler manifold, using the
property of the second fundamental form.

\begin{Theorem}\label{Th-subm}
Let $(\bar M,\bar{f}, \bar g)$ be a weak nearly K\"{a}hler manifold and $M^{2n+s}$ a submanifold of codimension $s$ equipped with
mutually orthogonal unit normals $N_i\ (i=1,\ldots,s)$ satisfying \eqref{E-NN}.
If~the second fundamental form of $M$ and the induced metric weak $f$-structure $({f},Q,\xi_i,\eta^i,g)$ on $M$, given by \eqref{Eq-nS-01}, satisfy
\begin{align}
\label{Eq-NS-4}
\notag
 (i) \ h_{N_i}(X,Y) & = g(Q X,\, Y) + \sum\nolimits_{\,j,k=1}^s \big( h_{N_i}(\xi_j,\xi_k) - \delta_{j,k}\big)\,\eta^j(X)\,\eta^k(Y) ,\\
 (ii)\ h_{N_i}(X,Y) & = \sum\nolimits_{\,j,k=1}^s h_{N_i}(\xi_j,\xi_k)\,\eta^j(X)\,\eta^k(Y) ,
\end{align}
and
\begin{align}\label{E-AA-cond}
 h_{N_i}(\xi_j,\xi_k) & = h_{N_j}(\xi_i,\xi_k)\quad (1\le i,j\le s),
\end{align}
then $({f},Q,\xi_i,\eta^i,g)$ is
\begin{align}
(i)~\mbox{a weak nearly ${\cal S}$-structure};\quad
(ii)~\mbox{a weak nearly ${\cal C}$-structure}.
\end{align}
\end{Theorem}

\begin{proof}
Substituting
\[
 \bar{f}\,Y = {f}\,Y - \sum\nolimits_{\,i=1}^s\bar g(\bar{f}N_i, Y)\,N_i
 = {f}\,Y - \sum\nolimits_{\,i=1}^s \eta^i(Y)\,N_i
\]
in $(\overline\nabla_X\bar{f})Y$, where $X,Y\in\mathfrak{X}_M$, and using \eqref{Eq-NS-5} and Lemma~\ref{L-nS-01}, we~obtain
\begin{align*}
 (\overline\nabla_X\bar{f})Y = \overline\nabla_X(\bar{f}\,Y) - \bar{f}(\overline\nabla_X Y)
 & = (\nabla_X {f})Y +\sum\nolimits_{\,i=1}^s\big\{\eta^i(Y) A_{N_i}X - h_{N_i}(X,Y)\,\xi_i\big\} \\
 & +\sum\nolimits_{\,i=1}^s\big\{X(\eta^i(Y))-\eta^i(\nabla_X Y) +h_{N_i}(X, {f}Y)\big\} N_i .
\end{align*}
Thus, the $TM$-component of the weak nearly K\"{a}hler condition \eqref{Eq-NS-9}, using \eqref{Eq-NS-5} and \eqref{Eq-NS-5A}, takes the form
\begin{align}\label{Eq-NS-7}
\notag
 &\big((\overline\nabla_X\bar{f})Y + (\overline\nabla_Y\bar{f})X \big)^\top
 =(\nabla_X{f})Y +(\nabla_Y{f})X \\
 & +\sum\nolimits_{\,i=1}^s\big\{\eta^i(X) A_{N_i}Y +\eta^i(Y) A_{N_i}X -2\,h_{N_i}(X,Y)\,\xi_i\big\} =0.
\end{align}
Using \eqref{Eq-NS-5A}, one can show that \eqref{Eq-NS-4} is equivalent to the following:
\begin{align}\label{Eq-NS-4A}
\notag
 & (i) \ A_{N_i}X = -f^2 X + \sum\nolimits_{\,j,k=1}^s h_{N_i}(\xi_j,\xi_k)\,\eta^j(X)\,\xi_k ,\\
 & (ii)\ A_{N_i}X = \sum\nolimits_{\,j,k=1}^s h_{N_i}(\xi_j,\xi_k)\,\eta^j(X)\,\xi_k.
\end{align}

(i) If we have a weak nearly $\cal S$-structure, see \eqref{E-nS-01}, then from \eqref{Eq-NS-7} we~get
\begin{align}\label{Eq-NS-7A}
\notag
 & 2\,g(f X, f Y)\,\bar\xi +\bar\eta(Y)f^2 X +\bar\eta(X)f^2 Y \\
 & +\sum\nolimits_{\,i=1}^s\big\{\eta^i(X) A_{N_i}Y +\eta^i(Y) A_{N_i}X -2\,h_{N_i}(X,Y)\,\xi_i\big\} = 0,
\end{align}
Substituting the expressions of $h_{N_i}(X,Y)$ and $A_{N_i}$, see \eqref{Eq-NS-4}(i) and \eqref{Eq-NS-4A}(i), in \eqref{Eq-NS-7A}
and using \eqref{E-AA-cond} gives identity;
thus, we obtain a weak nearly ${\cal S}$-structure on $M$.

\smallskip

(ii) If we have a weak nearly $\cal C$-structure, see \eqref{E-nS-01b}, then from \eqref{Eq-NS-7} we~get
\begin{align}\label{Eq-NS-7B}
 \sum\nolimits_{\,i=1}^s\big\{\eta^i(X) A_{N_i}Y +\eta^i(Y) A_{N_i}X -2\,h_{N_i}(X,Y)\,\xi_i\big\} = 0.
\end{align}
Substituting the expressions of $h_{N_i}(X,Y)$ and $A_{N_i}$, see \eqref{Eq-NS-4}(ii) and \eqref{Eq-NS-4A}(ii), in \eqref{Eq-NS-7B}
and using \eqref{E-AA-cond} gives identity; thus, we obtain a weak nearly ${\cal C}$-structure on $M$.
\end{proof}

For $Q={\rm Id}$, the properties of \eqref{Eq-NS-4} lead us to the following.

\begin{Definition}\rm
A codimension $s$ submanifold $M^{2n+s}$ of a
Hermitian manifold $(\bar M,\bar{f}, \bar g)$,
equipped with mutually orthogonal unit normals $N_i\ (i=1,\ldots,s)$ satisfying
\begin{align}\label{Eq-NS-5B}
 h_{N_i}(X,Y) = a_i\,g(X, Y) + \sum\nolimits_{\,j,k=1}^s b_{i,j,k}\,\eta^j(X)\,\eta^k(Y) ,
\end{align}
where $a_i,b_{i,j,k}\in C^\infty(M)$ and $\eta^i\ (1\le i\le s)$ are linear independent one-forms on $M$,
will be called an $s$-\textit{quasi-umbilical} submanifold.
For $s=1$, condition \eqref{Eq-NS-5B} reads as follows, see~\cite{rov-122}:
\[
 h_N(X,Y) = a_1\,g(X,Y)+ b_1\,\eta(X)\,\eta(Y).
\]
The geometric meaning of \eqref{Eq-NS-5B} is that the restriction of $h_{N_i}$ on the distribution $\bigcap_{\,i=1}^s\ker\eta^i$ looks similar to
$h$ for totally umbilical submanifolds: $h=({\rm trace}_{g}\,h/\dim M)\,g$.
\end{Definition}

The following consequence of Theorem~\ref{Th-subm} extends the fact (see~Theorem 4.1 in~\cite{blair1976}) that a hypersurface of a nearly K\"{a}hler manifold is nearly Sasakian or nearly cosymplectic if and only if it is quasi-umbilical with respect to the almost contact form.

\begin{Corollary}
Let $(\bar M,\bar{f}, \bar g)$ be a nearly K\"{a}hler manifold and $M^{2n+s}$ a submanifold of codimension $s$ equipped with
mutually orthogonal unit normals $N_i\ (i=1,\ldots,s)$ satisfying \eqref{E-NN},
and $({f},\xi_i,\eta^i,g=\bar g|_M)$ the induced metric $f$-structure on $M$, given by
\begin{align*}
 \xi_i= \bar{f}N_i,\quad \eta^i = \bar g(\bar{f}N_i,\,\cdot)\quad (i=1,\ldots,s),
 \quad
 {f} = \bar{f} + \sum\nolimits_{\,j=1}^s\bar g(\bar{f}N_j, \,\cdot)\,N_j.
\end{align*}
If $M^{2n+s}$ is an $s$-\textit{quasi-umbilical} submanifold (with respect to the 1-forms $\eta^i$),
\begin{align*}
\notag
 (i) \ h_{N_i}(X,Y) & = g(X,\, Y) + \sum\nolimits_{\,j,k=1}^s \big( h_{N_i}(\xi_j,\xi_k) - \delta_{j,k}\big)\,\eta^j(X)\,\eta^k(Y) ,\\
 (ii)\ h_{N_i}(X,Y) & = \sum\nolimits_{\,j,k=1}^s h_{N_i}(\xi_j,\xi_k)\,\eta^j(X)\,\eta^k(Y) ,
\end{align*}
and \eqref{E-AA-cond} are true, then
$({f}, \xi_i, \eta^i, g)$
is
(i)~{a nearly ${\cal S}$-structure};\
(ii)~{a nearly ${\cal C}$-structure}.
\end{Corollary}



\section{Conclusions}

We have shown  that weak nearly $\cal S$- and weak nearly $\cal C$-structures are useful for studying
metric $f$-structures, e.g., totally geodesic foliations, Killing vector fields, and $s$-quasi-umbilical submanifolds.
Some classical results have been extended in this paper to weak nearly $\cal S$- and weak nearly $\cal C$-manifolds with additional conditions.
Based on the numerous applications of nearly K\"{a}hler, nearly Sasakian, and nearly cosymplectic structures,
we expect that weak nearly K\"{a}hler, $\cal S$- and $\cal C$-structures will be useful for
geometry and theoretical physics,
e.g., for NGT, the theory of $s$-cosymplectic structures and $s$-contact structures, multi-time Hamiltonian systems, and $s$-evolution~systems.

\baselineskip=12.85pt

\end{document}